\theoremstyle{definition}
\newtheorem{theorem}{Theorem}[section]
\newtheorem{lemma}[theorem]{Lemma}
 \numberwithin{equation}{section}
\numberwithin{equation}{section}
\newcommand{\di}{\displaystyle}
\newcommand{\pa}{\partial}
\newcommand{\mb}{\mathbb}
\newcommand{\mf}{\mathbf}
\newcommand{\rvline}{\hspace*{-\arraycolsep}\vline\hspace*{-\arraycolsep}}
\begin{document}
  
\title{The Brioschi Formula for the Gaussian Curvature}
 \author{Lee-Peng Teo}
\address{Department of Mathematics, Xiamen University Malaysia\\Jalan Sunsuria, Bandar Sunsuria, 43900, Sepang, Selangor, Malaysia.}
\email{lpteo@xmu.edu.my}
\begin{abstract}
The Brioschi formula   expresses the Gaussian curvature $K$ in terms of the functions $E, F$ and $G$ in local coordinates of a surface $S$. This implies the Gauss' theorema egregium, which says that the Gaussian curvature just depends on angles, distances, and their rates of change. 

In most of the textbooks, the Gauss' theorema egregium was proved as a corollary to the derivation of the  Gauss equations,  a set of equations expressing $EK, FK$ and $GK$ in terms of the Christoffel symbols. The Christoffel symbols can be expressed in terms of $E$, $F$ and $G$. In principle, one can derive the Brioschi formula from the Gauss equations after some tedious calculations.

In this note, we give a direct elementary proof of the Brioschi formula without using Christoffel symbols. The key to the proof are properties of matrices and determinants. 
\end{abstract}
\subjclass[2020]{53A05}
\keywords{Gaussian curvature, Gauss' theorema egregium, Brioschi formula. }
\maketitle

 \section{Introduction}
 
 In the classical theory of surfaces in   $\mb{R}^3$, the Gauss' theorema egregium (remarkable theorem) states that the Gaussian curvature of a surface $S$ is invariant under local isometry. In layman terms, the Gaussian curvature is not changed if one bends the surface without stretching it.
 
 This theorem is remarkable since it is not obvious from the definition of the Gaussian curvature. One way to define the Gaussian curvature is to use the first fundamental form and the second fundamental form. Let $Edu^2+2Fdudv+Gdv^2$ and $Ldu^2+2Mdudv+Ndv^2$ be respectively the first fundamental form and the second fundamental form of the surface in local coordinates $(u,v)$. The functions $E, F$ and $G$ measures the angles and  distances, and so they are invariant under local isometries. The functions $L, M$ and $N$ measures the stretching. The   Gaussian curvature $K$ is given by
 \[K=\frac{LN-M^2}{EG-F^2}.\]
 The remarkable theorem of Gauss says that $K$ is expressible purely in terms of the functions $E, F$ and $G$ and their partial derivatives. The usual way to prove this theorem (see for example \cite{DoCarmo, Pressley, Kobayashi, ONeil, Tapp}) is to prove the Gauss equations which express $K$ in terms of $E, F, G$ and the Christoffel sysmbols $\Gamma_{ij}^k$, where $i, j, k\in \{1,2\}$. Since the Christoffel symbols are expressible in terms of $E, F, G$ and their partial derivatives, the remarkable theorem follows.
 
 The Brioschi formula for the Gaussian curvature gives the explicit formula of $K$ in terms of $E, F$, $G$ and their partial derivatives. It can be derived after some tedious calculations from the Gauss equations. 
 In this note, we give an elementary proof of the Brioschi formula which does not need to make use of the Christoffel symbols.

 \section{Classical Theory of Surfaces}
 In this section, we review some classical theory of surfaces. These materials can be found in standard textbooks such as \cite{DoCarmo, Pressley, Kobayashi, ONeil, Tapp}.
 
 Throughout this note, we use the Euclidean metric in $\mb{R}^3$ induced by the inner product
\[\langle \mathbf{a}, \mathbf{b}\rangle =\mathbf{a}\cdot\mathbf{b}=a_1b_1+a_2b_2+a_3b_3\] for two vectors $\mathbf{a}=(a_1, a_2, a_3)$ and $\mathbf{b}=(b_1, b_2, b_3)$ in $\mb{R}^3$.

Let $S$ be a surface in $\mb{R}^3$. 
Given a point $\mathbf{x}_0$ on the surface, there is an open subset $\mathcal{O}$ of $\mb{R}^2$, an open subset $U$ of $\mb{R}^3$ containing $\mathbf{x}_0$,  and a one-to-one smooth map $\boldsymbol{\sigma}:\mathcal{O}\to S\cap U$ which provides local coordinates $u$ and $v$ to the point $\boldsymbol{\sigma}(u,v)$ on $U\cap S$. The map $\boldsymbol{\sigma}:\mathcal{O}\to S$ is called a coordinate patch of $S$. For every $(u,v)$ in $\mathcal{O}$, the vectors $\boldsymbol{\sigma}_u(u,v)$ and $\boldsymbol{\sigma}_v(u,v)$ are linearly independent vectors in $\mb{R}^3$ which form a basis of the tangent space $T_{\boldsymbol{\sigma}(u,v)}S$ at $\boldsymbol{\sigma}(u,v)$. Define
 \begin{equation}\label{20240401_5}E =\langle \boldsymbol{\sigma}_u, \boldsymbol{\sigma}_u\rangle, \quad 
 F =\langle \boldsymbol{\sigma}_u, \boldsymbol{\sigma}_v\rangle, \quad
G =\langle \boldsymbol{\sigma}_v, \boldsymbol{\sigma}_v\rangle.\end{equation}
Then $Edu^2+2Fdudv+Gdv^2$ is called the first fundamental form of the coordinate patch $\boldsymbol{\sigma}:\mathcal{O}\to S$  of the surface. It defines the quadratic form $\mathcal{Q}_I: T_{\boldsymbol{\sigma}(u,v)}S \to \mb{R}$ corresponding to the inner product on $T_{\boldsymbol{\sigma}(u,v)}S $. For any tangent vector $\mathbf{w}$ in  $T_{\boldsymbol{\sigma}(u,v)}S $,
\[\mathcal{Q}_I(\mathbf{w}) =\langle \mathbf{w}, \mathbf{w}\rangle.\]
If  $\alpha$ and $\beta$ are such that 
\[\mathbf{w}=\alpha\boldsymbol{\sigma}_u(u,v)+\beta \boldsymbol{\sigma}_v(u,v),\]
then 
\[\mathcal{Q}_I(\mathbf{w})=\begin{bmatrix}\alpha & \beta\end{bmatrix}\mathcal{F}_I\begin{bmatrix}\alpha\\\beta\end{bmatrix}=E\alpha^2+2F\alpha\beta+G\beta^2,\] where $\mathcal{F}_I$ is the symmetric matrix
\[\mathcal{F}_I=\begin{bmatrix}E & F \\ F& G\end{bmatrix}.\]

At the point $\boldsymbol{\sigma}(u,v)$ on the surface, the vector
\begin{equation}\label{20240401_3}\mathbf{N}=\frac{\boldsymbol{\sigma}_u\times  \boldsymbol{\sigma}_v}{\Vert \boldsymbol{\sigma}_u\times  \boldsymbol{\sigma}_v\Vert}=\frac{\boldsymbol{\sigma}_u\times  \boldsymbol{\sigma}_v}{\sqrt{EG-F^2}}\end{equation}
is   the standard unit normal vector orthogonal to the tangent space  $T_{\boldsymbol{\sigma}(u,v)}S $.
The functions $L$, $M$ and $N$ are defined as
\begin{equation}\label{20240401_2}
L=\langle \boldsymbol{\sigma}_{uu}, \mathbf{N}\rangle, \quad M=\langle \boldsymbol{\sigma}_{uv}, \mathbf{N}\rangle, \quad N=\langle \boldsymbol{\sigma}_{vv}, \mathbf{N}\rangle.
\end{equation}
The quadratic form defined by the second fundamental form $Ldu^2+2Mdudv+Ndv^2$ is $\mathcal{Q}_{I\!I}: T_{\boldsymbol{\sigma}(u,v)}S \to \mb{R}$, 
\[\mathcal{Q}_{I\!I}(\alpha\boldsymbol{\sigma}_u +\beta \boldsymbol{\sigma}_v )=\begin{bmatrix}\alpha & \beta\end{bmatrix}\mathcal{F}_{I\!I}\begin{bmatrix}\alpha\\\beta\end{bmatrix}=L\alpha^2+2M\alpha\beta+N\beta^2,\] where $\mathcal{F}_{I\!I}$ is the symmetric matrix
\[\mathcal{F}_{I\!I}=\begin{bmatrix}L & M \\ M& N\end{bmatrix}.\]
To put this quadratic form in perspective, we need to introduce the Gauss map and the Weingarten map.

 Let $S^2$ be the standard two sphere $x^2+y^2+z^2=1$. It can be considered as the space that contains all the unit vectors in $\mb{R}^3$.
  At any point $\mathbf{w}_0 $ on $S^2$, the standard unit normal vector to $S^2$ at $\mathbf{w}_0$ is the vector $\mathbf{w}_0$ itself. The Gauss map  $\mathcal{G}:S\to S^2$ is a map from an orientable surface $S$ to $S^2$ mapping the point $\mathbf{x}_0$ to the unit normal vector  $\mathbf{w}_0=\mathbf{N}_{\mathbf{x}_0}$ at $\mathbf{x}_0$.

  Note that the tangent space $T_{\mathbf{x}_0}S$ of $S$ at $\mathbf{x}_0$ is naturally identified with the tangent space $T_{\mathbf{w}_0}S^2$ of $S^2$ at $\mathbf{w}_0$ since the standard unit normal vector to $S^2$ at $\mathbf{w}_0$ is $\mathbf{w}_0$ itself. The Weingarten map $\mathcal{W}_{\mathbf{x}_0,S}$ at $\mathbf{x}_0$ is  defined as $-D_{\mathbf{x}_0}\mathcal{G}$, the negative of the derivative of the  Gauss map at $\mathbf{x}_0$. Using the natural  identification of  $T_{\mathbf{x}_0}S$  with $T_{\mathbf{w}_0}S^2$, we can consider $\mathcal{W}_{\mathbf{x}_0, S}$ as a map from $T_{\mathbf{x}_0}S$ to itself. Since the derivative map $D_{\mathbf{x}_0}\mathcal{G}$   is a linear transformation between tangent spaces, the Weingarten map $\mathcal{W}_{\mathbf{x}_0, S}:T_{\mathbf{x}_0}S\to T_{\mathbf{x}_0}S$  is a linear transformation.  By definition,
  \begin{align*}
  \mathcal{W} (\boldsymbol{\sigma}_u)=-\mathbf{N}_u,\quad \mathcal{W}(\boldsymbol{\sigma}_v)=-\mathbf{N}_v.
  \end{align*}  
If
\[\mathbf{w}=\alpha \boldsymbol{\sigma}_u+\beta \boldsymbol{\sigma}_v,\] then
\begin{align*}
\langle \mathcal{W}(\mathbf{w}), \mathbf{w}\rangle 
&=\alpha^2\langle \mathcal{W}(\boldsymbol{\sigma}_u), \boldsymbol{\sigma}_u\rangle+\alpha\beta\left(\langle \mathcal{W}(\boldsymbol{\sigma}_u), \boldsymbol{\sigma}_v\rangle+\langle \mathcal{W}(\boldsymbol{\sigma}_v), \boldsymbol{\sigma}_u\rangle\right)+\beta^2\langle \mathcal{W}(\boldsymbol{\sigma}_v), \boldsymbol{\sigma}_v\rangle\\
&=-\alpha^2\langle  \mathbf{N}_u, \boldsymbol{\sigma}_u\rangle-\alpha\beta\left(\langle  \mathbf{N}_u, \boldsymbol{\sigma}_v\rangle+\langle   \mathbf{N}_v, \boldsymbol{\sigma}_u\rangle\right)-\beta^2\langle    \mathbf{N}_v, \boldsymbol{\sigma}_v\rangle.
\end{align*}
Using the fact that $\boldsymbol{\sigma}_u$ and $\boldsymbol{\sigma}_v$ are orthogonal to $\mathbf{N}$, we have
\begin{align*}
L=-\langle \boldsymbol{\sigma}_{u}, \mathbf{N}_u\rangle, \quad M=-\langle \boldsymbol{\sigma}_{u}, \mathbf{N}_v\rangle=-\langle \boldsymbol{\sigma}_{v}, \mathbf{N}_u\rangle, \quad N=-\langle \boldsymbol{\sigma}_{v}, \mathbf{N}_v\rangle.
\end{align*}
Therefore, 
\[\langle \mathcal{W}(\mathbf{w}), \mathbf{w}\rangle =L\alpha^2+2M\alpha\beta+N\beta^2=\mathcal{Q}_{I\!I}(\mathbf{w}).\]
In other words, the second fundamental form is the 
quadratic form corresponding to the Weingarten map. Since the matrix $\mathcal{F}_{I\!I}$ is symmetric, the Weingarten map is self-adjoint.

 The Gaussian curvature $K$ of the surface $S$ at $\mathbf{x}_0$ is defined as the determinant of the linear transformation $\mathcal{W}_{\mathbf{x}_0, S}: T_{\mathbf{x}_0}S\to T_{\mathbf{x}_0}S$. If
 \[\mathbf{A}=\begin{bmatrix} a & b\\ c & d\end{bmatrix}\] is the matrix of the linear transformation $\mathcal{W}$ with respect to the basis $\{\boldsymbol{\sigma}_u, \boldsymbol{\sigma}_v\}$, then
 \[K=\det\mathbf{A}=ad-bc.\]By definition, 
 \[\mathcal{W}(\boldsymbol{\sigma}_u)=a\boldsymbol{\sigma}_u+c\boldsymbol{\sigma}_v,\hspace{1cm} \mathcal{W}(\boldsymbol{\sigma}_v)=b\boldsymbol{\sigma}_u+d\boldsymbol{\sigma}_v.\]
 Taking inner products with $\boldsymbol{\sigma}_u$ and $\boldsymbol{\sigma}_v$ respectively, we find that
 \[L=aE+cF, \quad M=aF+cG,\quad M=bE+dF,\quad N=bF+dG.\]In other words,
 \begin{align*}
 \begin{bmatrix} L & M\\ M & N\end{bmatrix}=\begin{bmatrix} E & F\\F & G\end{bmatrix}\begin{bmatrix} a & b\\ c & d\end{bmatrix}.
 \end{align*}Equivalently,
 \[\mathbf{A}=\mathcal{F}_I^{-1}\mathcal{F}_{I\!I}.\]
 Therefore,
 \[K=\det\mf{A}=\frac{\det \mathcal{F}_{I\!I}}{\det\mathcal{F}_I}=\frac{LN-M^2}{EG-F^2}.\]

In the following, let us give a sketch of the usual approach to   Gauss' theorema egregium.
 The set of  vectors $\left\{\boldsymbol{\sigma}_u(u,v), \boldsymbol{\sigma}_v(u,v), \mathbf{N}(u,v)\right\}$ forms a basis of $\mathbb{R}^3$, and $\boldsymbol{\sigma}_u$ and $\boldsymbol{\sigma}_v$ are perpendicular to $\mathbf{N}$.  By definitions \eqref{20240401_2} of $L$, $M$ and $N$, we find that there exist functions $\Gamma_{11}^1$,  $\Gamma_{11}^2$, $\Gamma_{12}^1$,  $\Gamma_{12}^2$, $\Gamma_{22}^1$, $\Gamma_{22}^2$ such that
 \begin{align*}
 \boldsymbol{\sigma}_{uu}&=\Gamma_{11}^1\boldsymbol{\sigma}_u+\Gamma_{11}^2\boldsymbol{\sigma}_v+L\mathbf{N},\\
  \boldsymbol{\sigma}_{uv}&=\Gamma_{12}^1\boldsymbol{\sigma}_u+\Gamma_{12}^2\boldsymbol{\sigma}_v+M\mathbf{N},\\
   \boldsymbol{\sigma}_{vv}&=\Gamma_{22}^1\boldsymbol{\sigma}_u+\Gamma_{22}^2\boldsymbol{\sigma}_v+N\mathbf{N}.
 \end{align*}The functions $\Gamma_{11}^1$,  $\Gamma_{11}^2$, $\Gamma_{12}^1$,  $\Gamma_{12}^2$, $\Gamma_{22}^1$, $\Gamma_{22}^2$ are known as the Christoffel symbols. Taking inner products with $\boldsymbol{\sigma}_u$ and $\boldsymbol{\sigma}_v$ respectively, we find that
 \begin{equation}\label{20240331_1}
 \begin{split}
 \begin{bmatrix} E & F\\ F & G\end{bmatrix}\begin{bmatrix}\Gamma_{11}^1 & \Gamma_{12}^1 & \Gamma_{22}^1\\ \Gamma_{11}^2 & \Gamma_{12}^2 & \Gamma_{22}^2\end{bmatrix}&=\begin{bmatrix}\langle \boldsymbol{\sigma}_{uu}, \boldsymbol{\sigma}_u\rangle &\langle \boldsymbol{\sigma}_{uv}, \boldsymbol{\sigma}_u\rangle & \langle \boldsymbol{\sigma}_{vv}, \boldsymbol{\sigma}_u\rangle \\ \langle \boldsymbol{\sigma}_{uu}, \boldsymbol{\sigma}_v\rangle &\langle \boldsymbol{\sigma}_{uv}, \boldsymbol{\sigma}_v\rangle & \langle \boldsymbol{\sigma}_{vv}, \boldsymbol{\sigma}_v\rangle\end{bmatrix} \\
 &=\begin{bmatrix}\frac{1}{2}E_u & \frac{1}{2}E_v & F_v-\frac{1}{2}G_u\\F_u-\frac{1}{2}E_v & \frac{1}{2}G_u & \frac{1}{2}G_v \end{bmatrix}.
\end{split} \end{equation}This shows that the Christoffel symbols can be expressed in terms of the functions $E$, $F$, $G$ and their partial derivatives.
 Using the compactibility conditions
 \[(\boldsymbol{\sigma}_{uu})_v=(\boldsymbol{\sigma}_{uv})_u,\hspace{1cm} (\boldsymbol{\sigma}_{uv})_v=(\boldsymbol{\sigma}_{vv})_u,\]
 one can derive the Gauss equations 
  \begin{equation}\label{20240401_6}
  \begin{split}
 EK&=\frac{\pa \Gamma_{11}^2}{\pa v}-\frac{\pa \Gamma_{12}^2}{\pa u}+\Gamma_{11}^1\Gamma_{12}^2+\Gamma_{11}^2\Gamma_{22}^2-\left(\Gamma_{12}^2\right)^2-\Gamma_{12}^1\Gamma_{11}^2, \\
 FK&= \di\frac{\pa \Gamma_{12}^1}{\pa u}-\frac{\pa \Gamma_{11}^1}{\pa v}+\Gamma_{12}^1\Gamma_{12}^2-\Gamma_{22}^1\Gamma_{11}^2, \\
 FK&=\di\frac{\pa \Gamma_{12}^2}{\pa v}-\frac{\pa \Gamma_{22}^2}{\pa u}+\Gamma_{12}^1\Gamma_{12}^2-\Gamma_{22}^1\Gamma_{11}^2, \\
 GK&=\di\frac{\pa \Gamma_{22}^1}{\pa u}  -\frac{\pa \Gamma_{12}^1}{\pa v}+\Gamma_{11}^1\Gamma_{22}^1+\Gamma_{12}^1\Gamma_{22}^2-\left(\Gamma_{12}^1\right)^2-\Gamma_{22}^1\Gamma_{12}^2. \end{split}
 \end{equation}
 Anyone of these equations is sufficient to conclude the remarkable theorem of Gauss.
 
 In principal, one can use the formulas in \eqref{20240401_6} and \eqref{20240331_1} to derive a formula for the Gaussian curvature purely in terms of the functions $E, F, G$ and their partial derivatives. However, this is a tedious calculation and it gives a sense that the introduction of  the Christoffel symbols are indispensable. 

 \section{The   Brioschi Formula}

The Brioschi formula expresses the Gaussian curvature $K$ purely in terms of the functions $E, F, G$ and their partial derivatives. Although the   second fundamental form of a surface depends on the orientation of a surface, the Gaussian curvature does not. It is well-defined even though the surface is not orientable.

The following theorem gives the Brioschi formula.
\begin{theorem}\label{240401_1}
 Let $S$ be a surface, and let $\boldsymbol{\sigma}:\mathcal{O}\to S$ be a coordinate patch with first fundamental form $Edu^2+2Fdudv+Gdv^2$. The Gaussian curvature of $S$ is given by
 \[ K=\frac{\det \mf{B}_1 -\det\mf{B}_2}{(EG-F^2)^2},\]
where 
\begin{align*}\mf{B}_1=\begin{bmatrix} \di -\frac{1}{2}E_{vv}+F_{uv}-\frac{1}{2}G_{uu} &\di \frac{1}{2}E_u &\di  F_u-\frac{1}{2}E_v\\[2ex]
\di F_v-\frac{1}{2}G_u &E & F\\[2ex]
\di \frac{1}{2}G_v & F & G\end{bmatrix},\quad \mf{B}_2=\begin{bmatrix} 0&\di \frac{1}{2}E_v &\di  \frac{1}{2}G_u\\[2ex]
\di  \frac{1}{2}E_v &E & F\\[2ex]
\di \frac{1}{2}G_u & F & G\end{bmatrix}. \end{align*}
\end{theorem}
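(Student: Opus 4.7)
The idea is to rewrite $LN-M^2$ as a combination of Gram-type determinants of inner products and then extract the Brioschi expression by multilinearity of the determinant. Using $\mathbf{N} = (\boldsymbol{\sigma}_u \times \boldsymbol{\sigma}_v)/\sqrt{EG-F^2}$ in \eqref{20240401_2}, each of $L, M, N$ becomes a scalar triple product divided by $\sqrt{EG-F^2}$. Hence
\[(LN - M^2)(EG - F^2) = \det(A_{uu})\det(A_{vv}) - \det(A_{uv})^2,\]
where $A_{uu}$, $A_{uv}$, $A_{vv}$ are the $3\times 3$ matrices whose columns are $(\boldsymbol{\sigma}_{uu}, \boldsymbol{\sigma}_u, \boldsymbol{\sigma}_v)$, $(\boldsymbol{\sigma}_{uv}, \boldsymbol{\sigma}_u, \boldsymbol{\sigma}_v)$, $(\boldsymbol{\sigma}_{vv}, \boldsymbol{\sigma}_u, \boldsymbol{\sigma}_v)$, respectively. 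The left-hand side is $K(EG-F^2)^2$.

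The next step is to collapse each product of determinants via the Binet--Cauchy identity $\det A \det B = \det(A^{T}B)$, producing two $3\times 3$ determinants whose entries are inner products. In both, the lower-right $2\times 2$ block is $\mathcal{F}_I$; the remaining first-row and first-column entries are inner products of the second derivatives $\boldsymbol{\sigma}_{uu},\boldsymbol{\sigma}_{uv},\boldsymbol{\sigma}_{vv}$ with $\boldsymbol{\sigma}_u$ and $\boldsymbol{\sigma}_v$, and these are precisely the quantities already evaluated in \eqref{20240331_1}. After substitution, $A_{uu}^{T}A_{vv}$ coincides with $\mathbf{B}_1$ except in position $(1,1)$, where $\langle \boldsymbol{\sigma}_{uu},\boldsymbol{\sigma}_{vv}\rangle$ appears in place of $-\tfrac{1}{2}E_{vv}+F_{uv}-\tfrac{1}{2}G_{uu}$; and $A_{uv}^{T}A_{uv}$ coincides with $\mathbf{B}_2$ except in position $(1,1)$, where $\langle \boldsymbol{\sigma}_{uv},\boldsymbol{\sigma}_{uv}\rangle$ appears in place of $0$.

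The key observation is that only the difference of these two ``missing'' inner products enters the final answer. By multilinearity of $\det$ in the first column, each Gram determinant splits into (i) its $(1,1)$ entry times the cofactor $EG-F^2$, plus (ii) a determinant with $0$ in position $(1,1)$. Upon subtracting, the zero-corner pieces combine into $\det\mathbf{B}_1-\det\mathbf{B}_2$, while the cofactor pieces contribute $\bigl(\langle \boldsymbol{\sigma}_{uu},\boldsymbol{\sigma}_{vv}\rangle - \langle \boldsymbol{\sigma}_{uv},\boldsymbol{\sigma}_{uv}\rangle\bigr)(EG-F^2)$, which has to match the $(1,1)$ contribution $\bigl(-\tfrac{1}{2}E_{vv}+F_{uv}-\tfrac{1}{2}G_{uu}\bigr)(EG-F^2)$ of $\mathbf{B}_1$. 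The required identity
\[\langle \boldsymbol{\sigma}_{uu},\boldsymbol{\sigma}_{vv}\rangle - \langle \boldsymbol{\sigma}_{uv},\boldsymbol{\sigma}_{uv}\rangle = -\tfrac{1}{2}E_{vv}+F_{uv}-\tfrac{1}{2}G_{uu}\]
is obtained by differentiating $\tfrac{1}{2}E_v = \langle \boldsymbol{\sigma}_u,\boldsymbol{\sigma}_{uv}\rangle$ with respect to $v$, $\tfrac{1}{2}G_u = \langle \boldsymbol{\sigma}_v,\boldsymbol{\sigma}_{uv}\rangle$ with respect to $u$, and $F_u = \langle \boldsymbol{\sigma}_{uu},\boldsymbol{\sigma}_v\rangle + \langle \boldsymbol{\sigma}_u,\boldsymbol{\sigma}_{uv}\rangle$ with respect to $v$, and then taking the combination $F_{uv} - \tfrac{1}{2}E_{vv} - \tfrac{1}{2}G_{uu}$, so that the third-derivative cross terms $\langle \boldsymbol{\sigma}_u,\boldsymbol{\sigma}_{uvv}\rangle$ and $\langle \boldsymbol{\sigma}_v,\boldsymbol{\sigma}_{uuv}\rangle$ cancel.

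The only real obstacle is conceptual: noticing that $\langle \boldsymbol{\sigma}_{uu},\boldsymbol{\sigma}_{vv}\rangle$ and $\langle \boldsymbol{\sigma}_{uv},\boldsymbol{\sigma}_{uv}\rangle$, although not individually intrinsic, enter the problem only through their intrinsic difference, and that multilinearity of the determinant in a single column is the device that isolates this difference cleanly from within a $3\times 3$ Gram determinant. Once this is in place, dividing both sides of $(LN-M^2)(EG-F^2) = \det\mathbf{B}_1 - \det\mathbf{B}_2$ by $(EG-F^2)^2$ yields the Brioschi formula.
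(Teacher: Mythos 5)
Your proposal is correct and follows essentially the same route as the paper: rewriting $L$, $M$, $N$ as triple products, collapsing $\det A\det B$ into Gram determinants $\det(A^{T}B)$ whose lower-right block is $\mathcal{F}_I$, isolating the $(1,1)$ entries by multilinearity in the first column (this is exactly the content of the paper's Lemma~\ref{20240401_4}), and invoking the identity $\langle \boldsymbol{\sigma}_{uu},\boldsymbol{\sigma}_{vv}\rangle-\langle \boldsymbol{\sigma}_{uv},\boldsymbol{\sigma}_{uv}\rangle=-\tfrac{1}{2}E_{vv}+F_{uv}-\tfrac{1}{2}G_{uu}$ obtained by differentiating the same first-order relations. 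No gaps; the argument matches the paper's proof in all essential steps.
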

 
 This section is devoted to the proof of this formula. Note that the Gauss' theorema egregium  is a consequence of this formula. 
 
  Given a vector  $\mf{a}=(a_1, a_2, a_3)$  in $\mb{R}^3$, we  also denote by $\mf{a}$ the corresponding column vector, namely,
 \[\mf{a}=\begin{bmatrix} a_1\\a_2\\a_3\end{bmatrix}.\]
 In this notation, the Euclidean inner product  of the vectors $\mf{a}=(a_1, a_2, a_3)$ and $\mf{b}=(b_1, b_2, b_3)$ can be expressed as
 \[\langle \mathbf{a}, \mathbf{b}\rangle =\mathbf{a}^T\mathbf{b}.\]
If $\mf{c}_1, \mf{c}_2, \mf{c}_3$ are three vectors in $\mb{R}^3$, we denote by
 \[\begin{bmatrix} \mf{c}_1 &\rvline& \mf{c}_2 &\rvline &\mf{c}_3\end{bmatrix}\] the matrix with column vectors $\mf{c}_1$, $\mf{c}_2$ and $\mf{c}_3$. Then
 \[\det\mf{C}=\det\mf{C}^T=\langle\mf{c}_1, \mf{c}_2\times\mf{c}_3\rangle.\]
Given the matrices
 \[\mf{C}=\begin{bmatrix} \mf{c}_1 &\rvline& \mf{c}_2 &\rvline &\mf{c}_3\end{bmatrix}\quad\text{and}\quad
 \mf{D}=\begin{bmatrix} \mf{d}_1 &\rvline& \mf{d}_2 &\rvline &\mf{d}_3\end{bmatrix}, \] the components of $\mf{C}^T\mf{D}$ can be expressed in terms of inner products of their column vectors. Namely,
 \[\mf{C}^T\mf{D}=\begin{bmatrix} \langle\mf{c}_1, \mf{d}_1\rangle &   \langle\mf{c}_1, \mf{d}_2\rangle & \langle\mf{c}_1, \mf{d}_3\rangle \\\langle\mf{c}_2, \mf{d}_1\rangle &   \langle\mf{c}_2, \mf{d}_2\rangle & \langle\mf{c}_2, \mf{d}_3\rangle \\\langle\mf{c}_3, \mf{d}_1\rangle &   \langle\mf{c}_3, \mf{d}_2\rangle & \langle\mf{c}_3, \mf{d}_3\rangle \end{bmatrix}.\]
  
 The following is an elementary lemma which is key to the proof of the Brioschi's formula.
\begin{lemma}\label{20240401_4}
 The  identity  of determinants
   \begin{align*}
 \det \begin{bmatrix} a_1 & b_1 & c_1\\ d_1 & e & f\\h_1 & f & e\end{bmatrix} - \det \begin{bmatrix} a_2 & b_2 & c_2\\ d_2 & e & f\\h_2 & f & e\end{bmatrix}= \det \begin{bmatrix} a_1-a_2 & b_1 & c_1\\ d_1 & e & f\\h_1 & f & e\end{bmatrix} - \det \begin{bmatrix} 0 & b_2 & c_2\\ d_2 & e & f\\h_2 & f & e\end{bmatrix}
   \end{align*}holds.
\end{lemma}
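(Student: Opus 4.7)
My plan is to exploit the observation that all four matrices appearing in the identity share the same lower $2\times 2$ block $\begin{bmatrix} e & f \\ f & e\end{bmatrix}$. As a result, when any of the four determinants is expanded along its first row, the cofactor of the $(1,1)$-entry is the same constant $\Delta := e^2-f^2$, while the $(1,2)$- and $(1,3)$-cofactors depend only on the second and third columns of the matrix (that is, only on the relevant $d_i$ and $h_i$, together with the fixed entries $e, f$).

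\textbf{Execution.} I would use multilinearity of the determinant in its first row to decompose
\[\det\begin{bmatrix} a & b & c \\ d & e & f \\ h & f & e\end{bmatrix}=a\,\Delta + R(b,c,d,h),\]
where $R(b,c,d,h)$ denotes the determinant of the same matrix with its top-left entry replaced by $0$. Applying this decomposition to each of the four determinants, the left-hand side of the claimed identity becomes
\[(a_1-a_2)\Delta + R(b_1,c_1,d_1,h_1) - R(b_2,c_2,d_2,h_2),\]
and the right-hand side becomes the very same expression: the first determinant on the right contributes $(a_1-a_2)\Delta + R(b_1,c_1,d_1,h_1)$, while the second contributes $0\cdot\Delta + R(b_2,c_2,d_2,h_2)$. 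The identity then follows by inspection.

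\textbf{Obstacle.} I do not anticipate a real obstacle; the content of the lemma is just the observation that the top-left entry contributes linearly to the determinant via a factor depending only on the shared lower $2\times 2$ block, so the $a_1$ and $a_2$ contributions can be freely redistributed between the two determinants without disturbing the rest. What actually warrants attention is not the proof but the packaging: this elementary identity is the algebraic pivot that will let the Brioschi formula in Theorem~\ref{240401_1} be manipulated cleanly, since the matrices $\mf{B}_1$ and $\mf{B}_2$ there have exactly the same lower $2\times 2$ block $\begin{bmatrix} E & F \\ F & G\end{bmatrix}$.
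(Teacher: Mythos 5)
Your proposal is correct and is essentially the paper's own argument: both rest on expanding along the first row so that the top-left entry contributes linearly with the shared cofactor $e^2-f^2$, the paper phrasing this as $\det\mf{H}_1-\det\mf{H}_3=a_2\det\begin{bmatrix} e & f\\ f & e\end{bmatrix}=\det\mf{H}_2-\det\mf{H}_4$ while you phrase it as the decomposition $a\,\Delta+R(b,c,d,h)$. No substantive difference.
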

\begin{proof}We need to show that
 \[\det\mf{H}_1-\det\mf{H}_2= \det\mf{H}_3-\det\mf{H}_4,\]where
\begin{align*}
\mf{H}_1&= \begin{bmatrix} a_1 & b_1 & c_1\\ d_1 & e & f\\h_1 & f & e\end{bmatrix},\quad\mf{H}_2=\begin{bmatrix} a_2 & b_2 & c_2\\ d_2 & e & f\\h_2 & f & e\end{bmatrix},\\
\mf{H}_3&= \begin{bmatrix} a_1-a_2 & b_1 & c_1\\ d_1 & e & f\\h_1 & f & e\end{bmatrix}, \quad \mf{H}_4= \begin{bmatrix} 0 & b_2 & c_2\\ d_2 & e & f\\h_2 & f & e\end{bmatrix}.
 \end{align*} 
Using expansion with respect to the first row, we find that
 \[\det\mf{H}_1-\det\mf{H}_3=a_2 \det\begin{bmatrix} e & f\\f & e\end{bmatrix}=\det\mf{H}_2-\det\mf{H}_4.\]The result follows.
\end{proof}
 
 Now we can prove the Brioschi's formula for Gaussian curvature.
 \begin{proof}
 [Proof of Theorem \ref{240401_1}.]
 Since 
 \[K=\frac{LN-M^2}{EG-F^2},\]
the definitions \eqref{20240401_2} of $L, M,  N$ and the definition \eqref{20240401_3} of $\mf{N}$ give
 \begin{align*}
 K=\frac{\langle \boldsymbol{\sigma}_{uu},\boldsymbol{\sigma}_u\times\boldsymbol{\sigma}_v\rangle \langle \boldsymbol{\sigma}_{vv},\boldsymbol{\sigma}_u\times\boldsymbol{\sigma}_v\rangle -\langle \boldsymbol{\sigma}_{uv},\boldsymbol{\sigma}_u\times\boldsymbol{\sigma}_v\rangle^2}{(EG-F^2)^2}.
 \end{align*}
 Let $\mathcal{L}$, $\mathcal{M}$ and $\mathcal{N}$ be the matrices given by
 \[\mathcal{L}=\begin{bmatrix} \boldsymbol{\sigma}_{uu}&\rvline& \boldsymbol{\sigma}_u &\rvline &\boldsymbol{\sigma}_v\end{bmatrix}, \quad 
 \mathcal{M}=\begin{bmatrix} \boldsymbol{\sigma}_{uv}&\rvline& \boldsymbol{\sigma}_u &\rvline &\boldsymbol{\sigma}_v\end{bmatrix},\quad 
 \mathcal{N}=\begin{bmatrix} \boldsymbol{\sigma}_{vv}&\rvline& \boldsymbol{\sigma}_u &\rvline &\boldsymbol{\sigma}_v\end{bmatrix}.\]
Then
 \[K=\frac{\det \mathcal{L}\det\mathcal{N}-\left(\det\mathcal{M}\right)^2}{(EG-F^2)}=\frac{\det (\mathcal{L}^T\mathcal{N})-\det( \mathcal{M}^T\mathcal{M})}{(EG-F^2)^2}.\]
 Now,
 \begin{align*}\mathcal{L}^T\mathcal{N}&=\begin{bmatrix} \langle \boldsymbol{\sigma}_{uu},\boldsymbol{\sigma}_{vv}\rangle & \langle \boldsymbol{\sigma}_{uu},\boldsymbol{\sigma}_{u}\rangle & \langle \boldsymbol{\sigma}_{uu},\boldsymbol{\sigma}_{v }\rangle\\
  \langle \boldsymbol{\sigma}_{u},\boldsymbol{\sigma}_{vv}\rangle & \langle \boldsymbol{\sigma}_{u},\boldsymbol{\sigma}_{u}\rangle & \langle \boldsymbol{\sigma}_{u},\boldsymbol{\sigma}_{v }\rangle\\
   \langle \boldsymbol{\sigma}_{v},\boldsymbol{\sigma}_{vv}\rangle & \langle \boldsymbol{\sigma}_{v},\boldsymbol{\sigma}_{u}\rangle & \langle \boldsymbol{\sigma}_{v},\boldsymbol{\sigma}_{v }\rangle\end{bmatrix}, \\ \mathcal{M}^T\mathcal{M} &=\begin{bmatrix} \langle \boldsymbol{\sigma}_{uv},\boldsymbol{\sigma}_{uv}\rangle & \langle \boldsymbol{\sigma}_{uv},\boldsymbol{\sigma}_{u}\rangle & \langle \boldsymbol{\sigma}_{uv},\boldsymbol{\sigma}_{v }\rangle\\
  \langle \boldsymbol{\sigma}_{u},\boldsymbol{\sigma}_{uv}\rangle & \langle \boldsymbol{\sigma}_{u},\boldsymbol{\sigma}_{u}\rangle & \langle \boldsymbol{\sigma}_{u},\boldsymbol{\sigma}_{v }\rangle\\
   \langle \boldsymbol{\sigma}_{v},\boldsymbol{\sigma}_{uv}\rangle & \langle \boldsymbol{\sigma}_{v},\boldsymbol{\sigma}_{u}\rangle & \langle \boldsymbol{\sigma}_{v},\boldsymbol{\sigma}_{v }\rangle\end{bmatrix}.\end{align*}
   From
 \eqref{20240401_5}, 
   we have
   \begin{gather*}E_u=2\langle \boldsymbol{\sigma}_{uu},\boldsymbol{\sigma}_{u}\rangle,\quad F_u= \langle \boldsymbol{\sigma}_{uu},\boldsymbol{\sigma}_{v }\rangle+ \langle \boldsymbol{\sigma}_{u},\boldsymbol{\sigma}_{uv }\rangle,\quad G_u=2 \langle \boldsymbol{\sigma}_{uv},\boldsymbol{\sigma}_{v }\rangle,\\
   E_v=2\langle \boldsymbol{\sigma}_{uv},\boldsymbol{\sigma}_{u}\rangle,\quad F_v= \langle \boldsymbol{\sigma}_{uv},\boldsymbol{\sigma}_{v }\rangle+ \langle \boldsymbol{\sigma}_{u},\boldsymbol{\sigma}_{vv }\rangle,\quad G_v=2 \langle \boldsymbol{\sigma}_{vv},\boldsymbol{\sigma}_{v }\rangle.\end{gather*}
   It follows that
    \begin{align*}
 \langle  \boldsymbol{\sigma}_{uu},\boldsymbol{\sigma}_{u}\rangle &=\frac{1}{2}E_u,\hspace{2cm} \langle \boldsymbol{\sigma}_{uu},\boldsymbol{\sigma}_{v }\rangle=F_u-\frac{1}{2}E_v,\\
  \langle  \boldsymbol{\sigma}_{uv},\boldsymbol{\sigma}_{u}\rangle &=\frac{1}{2}E_v,\hspace{2cm} \langle \boldsymbol{\sigma}_{uv},\boldsymbol{\sigma}_{v }\rangle=\frac{1}{2}G_u,\\
   \langle  \boldsymbol{\sigma}_{vv},\boldsymbol{\sigma}_{u}\rangle &=F_v-\frac{1}{2}G_u,\hspace{1cm} \langle \boldsymbol{\sigma}_{vv},\boldsymbol{\sigma}_{v }\rangle= \frac{1}{2}G_v.
   \end{align*}
   Therefore,
   \begin{align*}\mathcal{L}^T\mathcal{N}&=\begin{bmatrix} \langle \boldsymbol{\sigma}_{uu},\boldsymbol{\sigma}_{vv}\rangle &\frac{1}{2}E_u & F_u-\frac{1}{2}E_v\\ 
F_v-\frac{1}{2}G_u &   E & F\\
   \frac{1}{2}G_v & F &G\end{bmatrix}, \end{align*}
   \begin{align*} \mathcal{M}^T\mathcal{M} &=\begin{bmatrix}  \langle \boldsymbol{\sigma}_{uv},\boldsymbol{\sigma}_{uv}\rangle &\frac{1}{2}E_v &  \frac{1}{2}G_u\\ 
\frac{1}{2}E_v&   E & F\\
   \frac{1}{2}G_u& F &G\end{bmatrix}.
   \end{align*}
 By Lemma \ref{20240401_4}, we find that
   \begin{align*}
   \det (\mathcal{L}^T\mathcal{N})-\det( \mathcal{M}^T\mathcal{M})=\det\mf{B}_1-\det\mf{B}_2,
   \end{align*}
   where
   \begin{align*}\mf{B}_1=\begin{bmatrix} \di \langle \boldsymbol{\sigma}_{uu},\boldsymbol{\sigma}_{vv}\rangle - \langle \boldsymbol{\sigma}_{uv},\boldsymbol{\sigma}_{uv}\rangle &\di \frac{1}{2}E_u &\di  F_u-\frac{1}{2}E_v\\[2ex]
\di F_v-\frac{1}{2}G_u &E & F\\[2ex]
\di \frac{1}{2}G_v & F & G\end{bmatrix},\quad \mf{B}_2=\begin{bmatrix} 0&\di \frac{1}{2}E_v &\di  \frac{1}{2}G_u\\[2ex]
\di  \frac{1}{2}E_v &E & F\\[2ex]
\di \frac{1}{2}G_u & F & G\end{bmatrix}. \end{align*}
 Since
   \begin{align*}
   E_{vv}&=2\langle \boldsymbol{\sigma}_{uvv},\boldsymbol{\sigma}_{u}\rangle+2\langle \boldsymbol{\sigma}_{uv},\boldsymbol{\sigma}_{uv}\rangle,\\
   F_{uv}&= \langle \boldsymbol{\sigma}_{uuv},\boldsymbol{\sigma}_{v }\rangle +\langle \boldsymbol{\sigma}_{uu},\boldsymbol{\sigma}_{vv }\rangle+ \langle \boldsymbol{\sigma}_{uv},\boldsymbol{\sigma}_{uv }\rangle+ \langle \boldsymbol{\sigma}_{u},\boldsymbol{\sigma}_{uvv }\rangle,\\
   G_{uu}&=2 \langle \boldsymbol{\sigma}_{uuv},\boldsymbol{\sigma}_{v }\rangle+2 \langle \boldsymbol{\sigma}_{uv},\boldsymbol{\sigma}_{vu }\rangle,
   \end{align*}we find that
   \[ \langle \boldsymbol{\sigma}_{uu},\boldsymbol{\sigma}_{vv}\rangle - \langle \boldsymbol{\sigma}_{uv},\boldsymbol{\sigma}_{uv}\rangle=-\frac{1}{2}E_{vv}+F_{uv}-\frac{1}{2}G_{uu}.\]This completes the proof.

 \end{proof}
 
 From the Brioschi formula, it is easy to derive the expression for  the Gaussian curvature $K$ in the special case where $F=0$. We leave this as an exercise to the readers. 
 
 Finally, we would like to remark that   the proof given here is the same as the one presented in the book \cite{Gray}. We believe that this is a more straightforward approach to Gauss' theorema egregium.

\bibliographystyle{amsalpha}
\bibliography{ref}
\end{document}